 \DeclareMathOperator{\perm}{Sym}
 \DeclareMathOperator{\soc}{soc}
 \DeclareMathOperator{\frat}{Frat}
\DeclareMathOperator{\core}{Core}
\DeclareMathOperator{\Ee}{E}
\DeclareMathOperator{\End}{End} 
\DeclareMathOperator{\der}{Der}
\newtheorem{thm}{Theorem}
\newtheorem{cor}[thm]{Corollary}
 \newtheorem{lemma}[thm]{Lemma}
\numberwithin{equation}{section}
\renewcommand{\footnote}{\endnote}
\newcommand{\ignore}[1]{}\makeglossary
\begin{document}
	\bibliographystyle{amsplain}
	\subjclass{20P05}
	\keywords{groups generation; waiting time; Sylow subgroups; permutations groups}
	\title[The expected number of elements to generate a group]{A bound on the expected number\\ of random elements to generate a finite  group
		\\all of whose Sylow subgroups are $d$-generated.}
	\author{Andrea Lucchini}
	\address{
		Andrea Lucchini\\ Universit\`a degli Studi di Padova\\  Dipartimento di Matematica\\ Via Trieste 63, 35121 Padova, Italy\\email: lucchini@math.unipd.it}
	\thanks{Partially supported by Universit\`a di Padova (Progetto di Ricerca di Ateneo: \lq\lq Invariable generation of groups\rq\rq).}

	\begin{abstract}Assume that all the Sylow subgroups of a finite group $G$ can be generated by $d$ elements. Then the expected number of elements of  $G$ which have to be drawn at random, with replacement,
		before a set of generators is found, is at most $d+\eta$ with
		$\eta \sim 2.875065.$
	\end{abstract}
	\maketitle
	\section{introduction}
	In 1989, R. Guralnick \cite{rg} and the author \cite{al} independently proved
	that if all the Sylow subgroups of a finite group $G$ can be generated by $d$ elements, then the group $G$ itself can be generated by $d+1$ elements.
	The aim of this paper is to obtain a probabilistic version of this result.
	
	\

	Let $G$ be a nontrivial finite group and let $x=(x_n)_{n\in\mathbb N}$ be a sequence of independent, uniformly distributed $G$-valued random variables.
	We may define a random variable $\tau_G$ by
	$\tau_G=\min \{n \geq 1 \mid \langle x_1,\dots,x_n \rangle = G\}.$
We denote by $e(G)$ the expectation $\Ee(\tau_G)$ of this random variable.
	In other word $e(G)$ is the expected number of elements of $G$ which have to be drawn at random, with replacement,
	before a set of generators is found.
	 Some estimations of the value $e(G)$ have been obtained in \cite{mon}. The main result of this paper is:

 \begin{thm}\label{main}Let $G$ be a finite group. If all the Sylow subgroups of $G$ can be generated by $d$ elements, then $$e(G)\leq d+\eta \quad \text { with } \quad \eta=\frac{5}{2}+\sum_{p\geq 3}\frac{1}{(p-1)^2}< 3.$$
 \end{thm}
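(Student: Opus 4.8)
The plan is to work from the elementary identity $e(G)=\sum_{t\ge 0}\bigl(1-P_G(t)\bigr)$, where $P_G(t)$ is the probability that $t$ independent uniformly distributed elements of $G$ generate $G$, and to estimate $1-P_G(t)$ through the normal structure of $G$. First I would reduce to the case $\Phi(G)=1$: since $\langle x_1,\dots,x_t\rangle=G$ exactly when $\langle x_1,\dots,x_t\rangle\Phi(G)=G$, the variables $\tau_G$ and $\tau_{G/\Phi(G)}$ are identically distributed, so $e(G)=e(G/\Phi(G))$, and every Sylow subgroup of $G/\Phi(G)$, being an epimorphic image of a Sylow subgroup of $G$, is still $d$-generated. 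Writing $d(G)$ for the minimal number of generators of $G$, one has $P_G(t)=0$ for $t<d(G)$ and $P_G(t)>0$ for $t\ge d(G)$, so that $e(G)=d(G)+\sum_{t\ge d(G)}\bigl(1-P_G(t)\bigr)$, and $d(G)\le d+1$ by the Guralnick--Lucchini theorem quoted above.

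With $\Phi(G)=1$ the socle of $G$ splits as a direct product of complemented minimal normal subgroups. Grouping the non-Frattini chief factors of a fixed chief series of $G$ by $G$-isomorphism type, Gasch\"utz's lemma together with the crown factorization of the probabilistic zeta function gives a factorization $P_G(t)=\prod_c P_c(t)$ over the crowns $c$ of $G$, whence $1-P_G(t)\le\sum_c\bigl(1-P_c(t)\bigr)$. For a crown $c$ whose chief factor is an abelian $p$-group $V$ of multiplicity $\delta_c$, writing $q_c=|\End_G(V)|$, the corresponding factor has the shape
\[
1-P_c(t)=1-\prod_{j=0}^{\delta_c-1}\Bigl(1-\frac{\gamma_c\,q_c^{\,j}}{|V|^{t}}\Bigr),
\]
where $\gamma_c$ is, up to a harmless bounded factor, the order of $H^1(G/C_G(V),V)$; for a crown with non-abelian chief factor a comparable estimate follows from Hall's enumeration of generating tuples of direct powers of simple groups, together with the fact that finite simple groups are $2$-generated with generation probabilities comfortably above the relevant thresholds.

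The hypothesis on Sylow subgroups enters through cohomology. By restriction and corestriction, $H^1(G/C_G(V),V)$ embeds into $H^1(\bar P,V)$ for $\bar P\in\Syl_p(G/C_G(V))$, since $V$ is a $p$-group and $[\,G/C_G(V):\bar P\,]$ is prime to $p$; and since a $1$-cocycle is determined by its values on a generating set, $|H^1(\bar P,V)|\le|Z^1(\bar P,V)|\le|V|^{d(\bar P)}\le|V|^{d}$, because $\bar P$ is an epimorphic image of a Sylow subgroup of $G$. Hence $\gamma_c$ is bounded polynomially in $|V|$ with the exponent controlled by $d$, and, combined with the Gasch\"utz description of $d(G)$ in terms of the crowns, this forces every factor $P_c(t)$ to be positive already for $t$ not much larger than $d$ and makes the geometric-type series $\sum_{t\ge d(G)}\bigl(1-P_c(t)\bigr)$ converge, after summation, to a quantity of size at most $1/(p-1)^2$ together with terms that telescope against the increments $d(G)-d(G/R_c)$, where $R_c$ denotes the part of $G$ lying below the crown $c$.

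Assembling these bounds in $e(G)=d(G)+\sum_{t\ge d(G)}\bigl(1-P_G(t)\bigr)$ with $d(G)\le d+1$, the prime $2$ together with the single possible extra Guralnick--Lucchini generator should account for the constant $5/2$, while each odd prime $p$ contributes at most $1/(p-1)^2$, giving $e(G)\le d+\tfrac52+\sum_{p\ge3}1/(p-1)^2$. The hard part will be exactly this final bookkeeping: the crude inequality $1-\prod_c(1-a_c)\le\sum_c a_c$ loses far too much when many crowns are present, or when one crown has large multiplicity $\delta_c$, since the total multiplicity $\sum_c\delta_c$ can vastly exceed $d$ — as already happens for $G=(C_2)^k$ or for $G$ cyclic of highly composite order. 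So the double summation over $t$ and over the crowns must be arranged so that the $d(G)$ mandatory draws are charged only once and what remains genuinely telescopes to the claimed constant; getting the abelian crowns of large multiplicity and the non-abelian crowns under uniform control, with the sharp numerical value of $\eta$, is where the real difficulty lies.
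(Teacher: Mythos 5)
Your general framework --- the identity $e(G)=\sum_{t\ge 0}(1-P_G(t))$, the reduction of $1-P_G(t)$ to a sum over crowns (equivalently, over conjugacy classes of maximal subgroups grouped by the isomorphism type of $\soc(G/\core_G(M))$), and the split into abelian and nonabelian types --- matches the paper's. But there is a genuine gap, and it is precisely the point you flag at the end as ``where the real difficulty lies'': you never establish the structural lemma that makes the bookkeeping close, and the mechanism you propose for exploiting the Sylow hypothesis is the wrong one. The paper's key input (its Lemma \ref{stime}) is that for each prime $p$ the \emph{total} number $\alpha_p(G)=\sum_t\alpha_{p,t}(G)$ of complemented chief factors of $p$-power order in a chief series is at most $d_p(G)\le d$, and that the number $\beta(G)$ of nonabelian chief factors satisfies $\alpha_2(G)+\beta(G)\le d_2(G)$, the latter via Tate's theorem ($P\cap N\not\le\frat P$ for $P\in\Syl_2(G)$ and $N$ a nonabelian minimal normal subgroup). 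This is exactly the statement you claim can fail (``the total multiplicity $\sum_c\delta_c$ can vastly exceed $d$''): it cannot, prime by prime, under your hypothesis --- in your own example $(C_2)^k$ one has $d_2=k$, so the multiplicity $k$ is not an obstruction. Without this lemma the series $\sum_{k\ge d+2}\sum_V q_V^{\delta_V}|H^1|/|V|^k$ has no reason to converge to a bounded constant, and the nonabelian crowns are completely uncontrolled (you offer no bound on $\beta(G)$ at all).

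Your proposed use of the Sylow hypothesis through cohomology --- restricting $H^1(G/C_G(V),V)$ to a Sylow subgroup to get $|H^1|\le|V|^{d}$ --- is both not how the paper proceeds and quantitatively useless here: with $|H^1|\le|V|^d$ the leading term of the tail at $k=d+2$ is of order $|V|^{\delta_V-2}$, which is unbounded. The paper instead uses the absolute Guralnick--Hoffman bound $|\der(G/C_G(V),V)|\le|V|^{3/2}$ (and $\le|V|$ in the soluble case), which has nothing to do with the Sylow hypothesis; the hypothesis enters \emph{only} through the multiplicity bounds of Lemma \ref{stime}. With those two ingredients one gets $m_{p^t}^A(G)\le p^{t(\alpha_{p,t}(G)+1)}/(p-1)$ and $m_n^B(G)\le\beta(G)(\beta(G)+1)n^2/2$, and the double sum over $k\ge d+2$ and $n$ then genuinely evaluates to $\tfrac12+\sum_{p\ge 3}(p-1)^{-2}$ plus the $1/4$ from the nonabelian part, absorbed into the $5/2$. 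As written, your argument identifies the right skeleton but is missing the one lemma that carries the whole proof, and you explicitly leave the decisive estimates undone.
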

 
From an accurate estimation of $\sum_p (p-1)^{-2}$ given in \cite{hc}, it follows 
$\eta \sim 2.875065...$
 	This result is near to be best possible. For any prime $p,$ let $A_{p,d}$ be the elementary abelian $p$-group of rank $d$ and for any positive integer $n$
 consider $A_{n,d}=\prod_{p\leq n}A_{p,d}.$ C. Pomerance \cite{pom} proved that $\lim_{n\to \infty} e(A_{n,d})=d+\sigma$, where $\sigma \sim 2.11846...$ (the exact value of $\sigma$ can be
 explicitly described in terms of the Riemann zeta--function).

\
 
  If $G$ is a $p$-subgroup of $\perm(n),$ then $G$ can be generated by $\lfloor n/p\rfloor$ elements (see \cite{kp}), so Theorem \ref{main} has the following consequence:
  	
\begin{cor}
	If $G$ is a permutation group of degree $n,$ then $e(G)\leq  \lfloor n/2\rfloor+\eta.$
\end{cor}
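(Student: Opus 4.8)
The plan is to reduce the statement to Theorem~\ref{main} by exhibiting a single integer $d$ that bounds the number of generators of every Sylow subgroup of $G$. First I would fix a permutation group $G\leq\perm(n)$ and, for each prime $p$ dividing $|G|$, select a Sylow $p$-subgroup $P\in\Syl_p(G)$. Since $P\leq G\leq\perm(n)$, the subgroup $P$ is in particular a $p$-subgroup of $\perm(n)$, so the result of \cite{kp} applies directly and shows that $P$ can be generated by at most $\lfloor n/p\rfloor$ elements.

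The key observation is then purely arithmetical: the map $p\mapsto\lfloor n/p\rfloor$ is non-increasing in $p$, so over all primes its value is largest at the smallest prime $p=2$, where it equals $\lfloor n/2\rfloor$. Note also that primes $p>n$ do not divide $|G|$, because $|G|$ divides $n!$, hence they contribute no nontrivial Sylow subgroup. Combining these two remarks, every Sylow subgroup of $G$, for whichever prime $p$, can be generated by at most $\lfloor n/2\rfloor$ elements.

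Setting $d=\lfloor n/2\rfloor$, the hypothesis of Theorem~\ref{main} is therefore met, and that theorem gives $e(G)\leq d+\eta=\lfloor n/2\rfloor+\eta$, which is exactly the desired bound. I do not anticipate a genuine obstacle: the content of the corollary lies entirely in \cite{kp} and in Theorem~\ref{main}, and the only step requiring a moment of care is the passage from the prime-by-prime bounds $\lfloor n/p\rfloor$ to a single uniform value of $d$, which rests on the monotonicity of $\lfloor n/p\rfloor$ and the fact that the worst case is realized at $p=2$.
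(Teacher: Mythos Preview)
Your proposal is correct and follows essentially the same approach as the paper: apply \cite{kp} to bound each Sylow $p$-subgroup by $\lfloor n/p\rfloor$ generators, take $d=\lfloor n/2\rfloor$ as the uniform bound, and invoke Theorem~\ref{main}. The paper states this in a single sentence before the corollary; your version simply spells out the monotonicity in $p$ more explicitly.
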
 

A profinite group $G$, being a compact
topological group,  can be seen as a probability space.
If we denote with $\mu$ the
normalized Haar measure on $G$, so that $\mu(G)=1$, the
probability that $k$ random elements generate (topologically) $G$ is defined as
$$P_G(k)=\mu(\{ (x_1,\ldots,x_k)\in G^k|\langle x_1,\ldots,x_k\rangle
=G\}),$$ where $\mu$ denotes also the product measure on $G^k$.
The definition of $e(G)$ can be extended to finitely generated profinite groups. In particular $e(G)= \sup_{N\in \mathcal N}e(G/N)$, being $\mathcal N$ the set of the open normal subgroups of $G$ (see for example \cite[Section 6]{mon}), hence
Theorem \ref{main} remains true for profinite groups:\begin{sl} if all the Sylow subgroups of a profinite group $G$ are (topologically) $d$-generated, then $G$ is (topologically) $(d+1)$-generated and
$e(G)\leq d+\eta.$\end{sl}
A profinite group $G$ is said to be positively finitely generated, PFG for
short, if $P_G(k)$ is positive for some natural number $k$, and the
least such natural number is denoted by $d_P(G)$.
Not all finitely generated profinite groups are PFG (for example if $\hat F_d$ is the free profinite group of rank $d\geq 2$ then
$P_{\hat F_d}(t)=0$ for every $t\geq d$, see \cite{KL}): if $G$ is not PFG we set $d_P(G)=\infty.$ It can be easily seen that
$e(G)=\sum_{n\geq 0}1-P_G(n)$ (see (\ref{inizi1}) in Section \ref{main}). Since $P_G(n)=0$ whenever $n\leq d_P(G),$ we immediately deduce
that $e(G)> d_P(G).$ In particular, if all the Sylow subgroups of $G$ are $d$-generated, then $d_P(G)<e(G)<d+3,$ and therefore we obtain the following result:

\begin{thm}\label{profi}
If all the Sylow subgroups of a profinite group $G$ are (topologically) $d$-generated, then $d_P(G)\leq d+2.$
\end{thm}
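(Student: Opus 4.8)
The plan is to obtain Theorem~\ref{profi} as a formal consequence of Theorem~\ref{main} (in its profinite formulation) together with the elementary comparison $d_P(G)\le e(G)$. Two ingredients are needed, both already available in the discussion above: the identity $e(G)=\sum_{n\ge 0}\bigl(1-P_G(n)\bigr)$, valid for every finitely generated profinite group $G$, which links $e(G)$ to $d_P(G)$; and the estimate $e(G)\le d+\eta$ furnished by the profinite version of Theorem~\ref{main}.

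First I would record the probabilistic identity. Since $\tau_G$ is a random variable with values in $\{1,2,\dots\}\cup\{\infty\}$, one has $\Ee(\tau_G)=\sum_{n\ge 0}\mathbb P(\tau_G>n)$, and $\mathbb P(\tau_G>n)=\mathbb P(\langle x_1,\dots,x_n\rangle\neq G)=1-P_G(n)$, which gives the stated formula for $e(G)$. If $G$ is not PFG then $P_G(n)=0$ for every $n$, so this series diverges, $e(G)=\infty$, and $d_P(G)=\infty$ by the chosen convention. If $G$ is PFG then $P_G(n)=0$ for all $n<d_P(G)$, so
\[e(G)=\sum_{n\ge 0}\bigl(1-P_G(n)\bigr)\ \ge\ \sum_{n=0}^{d_P(G)-1}1\ =\ d_P(G),\]
and the inequality is strict when $G$ is nontrivial, since then $P_G(d_P(G))<1$. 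In every case $d_P(G)\le e(G)$.

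Next I would invoke Theorem~\ref{main}. Since $e(G)=\sup_{N\in\mathcal N}e(G/N)$ over the open normal subgroups $N$, and each finite quotient $G/N$ again has all of its Sylow subgroups $d$-generated, the hypothesis of Theorem~\ref{profi} yields $e(G)\le d+\eta<d+3$. In particular $e(G)$ is finite, hence $G$ is PFG, and combining with the previous paragraph gives $d_P(G)\le e(G)<d+3$. As $d_P(G)$ is a nonnegative integer, we conclude $d_P(G)\le d+2$.

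I do not anticipate any real difficulty here: all the weight of the argument sits in Theorem~\ref{main}, and in the observation that its proof carries over to the profinite setting through the supremum formula for $e(G)$ (each relevant finite quotient inheriting the $d$-generation of Sylow subgroups). Once that is granted, Theorem~\ref{profi} follows purely formally from the relations among $e(G)$, $P_G$, and $d_P(G)$.
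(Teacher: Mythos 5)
Your proposal is correct and follows essentially the same route as the paper: the author also derives Theorem~\ref{profi} from $e(G)=\sum_{n\ge 0}(1-P_G(n))$, the consequent inequality $d_P(G)<e(G)$, and the profinite form of Theorem~\ref{main} giving $e(G)\le d+\eta<d+3$ (the paper phrases the formal proof as a contradiction from $P_G(d+2)=0$, which is the same computation read contrapositively).
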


The previous result is best possible. For a given $d\in \mathbb N,$ let $A_{p,d}$ be an elementary abelian $p$-group of rank $d$, $A_d=\prod_{p\neq 2}A_{p,d}$ and
consider the semidirect product $G_d=A_d\rtimes B,$ where $B=\langle b \rangle$ is cyclic of order 2 and $a^b=a^{-1}$ for every $a\in A_d.$ Clearly all the Sylow subgroups of $G$ are  $d$-generated. We claim that $d_P(G)=d+2.$ It follows from the main theorem in
\cite{g2} that for every $k \in \mathbb N,$ we have
$$P_{G_d}(k)=\left(1-\frac{1}{2^k}\right)\prod_{p\neq 2}\left(1-\frac{p}{p^k}\right)\cdots \left(1-\frac{p^d}{p^k}\right).$$
In particular $P_{G_d}(d+1)=0$ since the series
$$\sum_{p\neq 2}\left(\frac{p}{p^{d+1}}+\cdots+\frac{p^d}{p^{d+1}}\right)=\sum_{p\neq 2}\frac{p^{d+1}-1}{(p-1)p^{d+1}}$$ is divergent. But then $d_P(G)>d+1,$ hence, by Theorem \ref{profi}, we conclude $d_P(G)=d+2.$

\section{Proof of the main result}\label{main}

Let $G$ be a finite group and use the following notations:
\begin{itemize}
	\item For a given prime $p,$ $d_p(G)$ is the smallest cardinality of a generating set of a Sylow $p$-subgroup of $G.$
	\item For a given prime $p$ and a positive  integer $t,$ $\alpha_{p,t}(G)$ is the number of complemented  factors of  order $p^t$ in a chief series of $G.$
	\item For a given prime $p,$ $\alpha_p(G)=\sum_t \alpha_{p,t}(G)$ is the number of complemented factors of $p$-power order in a chief series of $G.$
	\item $\beta(G)$ is the number of nonabelian factors  in a chief series of $G.$
\end{itemize}

\begin{lemma}\label{stime}For every finite group $G,$ we have:
\begin{enumerate} 
	\item $\alpha_p(G)\leq d_p(G).$
	\item $\alpha_2(G)+\beta(G)\leq d_2(G).$
	\item If $\beta(G)\neq 0$, then $\beta(G)\leq d_2(G)-1.$
\end{enumerate}
\end{lemma}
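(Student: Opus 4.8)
The plan is to reduce each statement to a counting of complemented chief factors and then exploit the fact that, inside a Sylow $p$-subgroup $P$ of $G$, the complemented $p$-chief factors give rise to a chain of normal subgroups of $P$ whose successive quotients are elementary abelian; the Frattini quotient of $P$ then bounds how many such factors can occur. First I would fix a chief series $1=G_0\lhd G_1\lhd\cdots\lhd G_n=G$ and, for each $i$ with $G_i/G_{i-1}$ a complemented $p$-factor, choose a maximal subgroup $M_i$ of $G$ with $G_i M_i=G$ and $G_{i-1}\le M_i$. Intersecting with a fixed Sylow $p$-subgroup $P$ of $G$ (chosen so that $P\le M_i$ is \emph{not} assumed — rather one uses $G=G_iM_i$ together with the order of $G_i/G_{i-1}$ being a $p$-power), one shows $|P:P\cap M_i|=|G_i/G_{i-1}|$, so $P\cap M_i$ is a proper subgroup of $P$ of index $p^{t}$ when the factor has order $p^{t}$. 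The standard argument (this is exactly the Guralnick–Lucchini reduction, \cite{rg},\cite{al}) is that these intersections, taken together, force $d_p(G)=d(P)\ge \alpha_p(G)$, because one can produce $\alpha_p(G)$ independent conditions modulo the Frattini subgroup of $P$. That gives (1).

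For (2), the extra input is that each nonabelian chief factor $S=S_1\times\cdots\times S_k$ of $G$ also contributes to the generation number of a Sylow $2$-subgroup: every nonabelian finite simple group has even order, so the Sylow $2$-subgroup of $S$ is nontrivial, and moreover in the relevant quotient the action on such a factor cannot be absorbed by the abelian ones. Concretely I would pass to $G/\frat(G)$ or, better, build a single normal subgroup $N$ of $G$ such that $G/N$ has a faithful action accounting simultaneously for the $\alpha_2(G)$ complemented $2$-factors and the $\beta(G)$ nonabelian factors, and then apply the known bound $d(H)\le d(\text{Sylow }2\text{ of }H)$-type estimate, or argue directly that a Sylow $2$-subgroup of such a $G/N$ needs at least $\alpha_2(G)+\beta(G)$ generators. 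The key point is that a nonabelian chief factor forces at least one "new" generator in the Sylow $2$-subgroup beyond those needed for the $2$-chief factors, because the simple group $\alt(5)$ (or any nonabelian simple group) is not $2$-nilpotent and its Sylow $2$-subgroup does not sit inside the Frattini-type obstruction coming from abelian factors. Summing these contributions across the chief series yields $\alpha_2(G)+\beta(G)\le d_2(G)$.

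For (3), when $\beta(G)\neq 0$ one improves the bound by $1$: the point is that a Sylow $2$-subgroup of a group with a nonabelian composition factor is always \emph{nontrivial and not minimal} in the relevant sense, so one of the $\beta(G)$ nonabelian factors can be "paid for" by a generator that is already forced, or equivalently one can choose the maximal subgroups realizing the complemented factors so that the corresponding Sylow intersections overlap by one dimension. In practice I would argue: if $\beta(G)\ne 0$, pick a minimal normal subgroup-style reduction to a single nonabelian chief factor $S^k$; its automizer forces $d_2$ of the Sylow to be at least $\alpha_2(G)+\beta(G)$, but in fact the Sylow $2$-subgroup of $S^k$ itself already requires $\ge k$ generators and these are not independent of the remaining $\alpha_2(G)+\beta(G)-k$ conditions once $k\ge 1$, saving one. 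Alternatively, (3) follows formally from (2) applied to a suitable subgroup or quotient where one abelian $2$-factor has been traded for the nonabelian part.

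The main obstacle will be step (2)/(3): making precise the claim that nonabelian chief factors contribute \emph{independently} to the minimal number of generators of a Sylow $2$-subgroup, i.e.\ that the $\alpha_2(G)$ abelian conditions and the $\beta(G)$ nonabelian conditions cannot be satisfied by the same generators modulo the Frattini subgroup. This requires the crown-based / Gaschütz-style analysis of $G/\frat(G)$ as a subdirect product, together with the fact — ultimately a consequence of the classification, or of the Feit–Thompson theorem for the parity statement — that every nonabelian simple group has a nontrivial Sylow $2$-subgroup that is not "cyclic-trivial" in the needed sense; the $-1$ in (3) is the delicate bookkeeping that one nonabelian factor and one abelian $2$-factor can be handled by a common generator when at least one nonabelian factor is present.
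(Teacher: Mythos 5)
Your proposal identifies the right general flavour --- reduce along a chief series and exploit that nonabelian chief factors force extra generators in a Sylow $2$-subgroup --- but the two points you yourself flag as ``the main obstacle'' are exactly the content of the lemma, and they are left unproved. The paper sidesteps the independence problem entirely by inducting on $|G|$ through a \emph{single} minimal normal subgroup $N$, rather than trying to exhibit $\alpha_p(G)$ (or $\alpha_2(G)+\beta(G)$) simultaneously independent hyperplane conditions in $P/\frat(P)$. For (1): if $N$ is a complemented minimal normal $p$-subgroup with complement $H$, then a Sylow $p$-subgroup of $G$ is $P=N\rtimes Q$ with $Q\in\Syl_p(H)$, and $N\not\leq \frat(P)$ (otherwise $P=NQ=\frat(P)Q$ would force $P=Q$), whence $d_p(G)\geq d_p(H)+1$; in all other cases one passes to $G/N$ without changing $\alpha_p$. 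No global ``$\alpha_p(G)$ independent conditions'' claim is ever needed, which is fortunate because your sketch gives no way to prove it: the subgroups $P\cap M_i$ each lie under some maximal subgroup of $P$, but nothing in your argument prevents the corresponding hyperplanes of $P/\frat(P)$ from coinciding.

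For (2) the missing ingredient is a specific theorem, namely Tate's theorem: if $N$ is a nonabelian minimal normal subgroup, $P\in\Syl_2(G)$ and $P\cap N\leq \frat(P)$, then $N$ would have a normal $2$-complement, which is impossible for a direct power of a nonabelian simple group (such a group has even order and no proper normal subgroup of $2$-power index). Hence $P\cap N\not\leq\frat(P)$, so $d_2(G)\geq d_2(G/N)+1$, and the induction hypothesis applied to $G/N$ does all the bookkeeping; one never has to argue that the abelian and nonabelian ``conditions'' are independent of each other. For (3), your ``trading one generator'' heuristic does not locate the actual source of the extra $-1$: the induction reduces to the base case $\beta(G)=1$, $\beta(G/N)=0$, where one needs $d_2(G)\geq 2$, and this holds because a Sylow $2$-subgroup of a finite nonabelian simple group is never cyclic (a cyclic Sylow $2$-subgroup forces a normal $2$-complement by Burnside's transfer theorem). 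Without Tate's theorem for (2) and without this non-cyclicity fact for (3), the proposal does not close.
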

\begin{proof}
We prove the three statements by induction on the order of $G.$ Let $N$ 
be a minimal normal subgroup of $G.$
\begin{enumerate}
	\item If $N$ is not complemented in $G$ or $N$ is not a $p$-group, then, by induction,
	$$\alpha_p(G)=\alpha_p(G/N) \leq d_p(G/N)=d_p(G).$$
	Otherwise $G=N\rtimes H$ for a suitable $H\leq G$ and $\alpha_p(G)=\alpha_p(G/N)+1=\alpha_p(H)+1\leq d_p(H)+1\leq d_p(G).$
	\item  If $N$ is abelian, we argue as in (1). Assume that $N$ is nonabelian  and let $P$ be a Sylow
	2-subgroup of $G$. By Tate's Theorem \cite[p. 431]{hup}, $P\cap N\not\leq \frat P,$ and consequently $\beta(G)=\beta(G/N)+1\leq d_2(G/N)+1\leq d_2(G).$
		\item Suppose $\beta(G) \neq 0$. As before we may assume that $N$ is nonabelian and this implies $d_2(G/N)+1\leq d_2(G).$ If $\beta(G/N)\neq 0,$ then we easily conclude by induction. If $\beta(G/N)=0$ then $\beta(G)=1$
		while $d_2(G)\geq 2,$	
		since a Sylow 2-subgroup of a finite nonabelian simple group, and consequently of $N,$ is never cyclic.
		\qedhere 
	\end{enumerate}
\end{proof}

	Notice that $\tau_G>n$ if and only if $\langle x_1,\dots,x_n \rangle \neq G$, so we have $P(\tau_G>n)=1-P_G(n),$ denoting by $P_G(n)$ the probability that $n$ randomly chosen
	elements of $G$ generate $G.$
	Clearly we have:
	\begin{equation}\label{inizi1}
	\begin{aligned}e(G)&=\sum_{n\geq 1}nP(\tau_G=n)=\sum_{n\geq 1}\left(\sum_{m\geq n}P(\tau_G=m)\right)\\
	&=\sum_{n\geq 1}P(\tau_G\geq n)=\sum_{n\geq 0}P(\tau_G>n)=\sum_{n\geq 0}(1-P_G(n)).
	\end{aligned}
	\end{equation}
	
	Denote by $m_n(G)$ the number of index $n$ maximal subgroups of $G.$	We have (see \cite[11.6]{sub}):
	\begin{equation}\label{inizi2}1-P_G(k)\leq \sum_{n\geq 2}\frac{m_n(G)}{n^{k}}.
	\end{equation}

 Using the notations introduced in \cite[Section 2]{pak}, we say that a maximal subgroup $M$ of $G$ is of type A if $\soc(G/\core_G(M))$ is abelian, of type B otherwise, and we denote by $m^A_n(G)$ (respectively $m^B_n(G)$)  the number of maximal subgroups of $G$ of type A (respectively B) of
index $n.$  Given an irreducible $G$-group $V$, let $\delta_V(G)$ be the number of
complemented factors $G$-isomorphic to $V$ in a chief series of $G$ and $q_{V}(G)=|\End_{G}(V)|$.
Moreover, for $n\in \mathbb N,$ let $\mathcal A_n$ be the set of the irreducible $G$-modules
$V$ with $\delta_V(G)\neq 0$ and $|V|=n.$

\begin{lemma}\label{lab1}Let $n=p^t$ for some prime $p$. If $m_n^A(G)\neq 0,$ then $\alpha_{p,t}(G)\neq 0$ and $$m_n^A(G)\leq \frac{n^{\alpha_{p,t}(G)+1}}{p-1}.$$
\end{lemma}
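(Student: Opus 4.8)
The plan is to decompose $m_n^A(G)$ according to the isomorphism type of the socle of $G/\core_G(M)$ and to estimate each piece by means of crown theory. If $M$ is a maximal subgroup of type $A$ of index $n$, then $G/\core_G(M)$ is primitive and its socle is abelian; an abelian minimal normal subgroup of a primitive group is its unique minimal normal subgroup and is self-centralising, so $V:=\soc(G/\core_G(M))$ is an irreducible $G$-module, $M/\core_G(M)$ is a complement of $V$ in $G/\core_G(M)$, $|V|=|G:M|=n$, and $\delta_V(G)\neq 0$, i.e. $V\in\mathcal A_n$. Conversely, writing $m_V(G)$ for the number of maximal subgroups $M$ of $G$ with $\soc(G/\core_G(M))\cong V$, each such $M$ is of type $A$ and has index $|V|$. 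Hence
$$m_n^A(G)=\sum_{V\in\mathcal A_n}m_V(G).$$
In particular, if $m_n^A(G)\neq 0$ then $\mathcal A_n\neq\emptyset$, so $\delta_V(G)\geq 1$ for some $V$ and thus $\alpha_{p,t}(G)=\sum_{V\in\mathcal A_n}\delta_V(G)\geq 1$; this settles the first assertion and reduces the inequality to a bound on $m_V(G)$ for each fixed $V\in\mathcal A_n$.

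Fix $V\in\mathcal A_n$ and put $\delta=\delta_V(G)$, $q=q_V(G)=|\End_G(V)|$ and $L=G/C_G(V)$, so that $L$ acts faithfully and irreducibly on $V$. Every maximal subgroup $M$ with $\soc(G/\core_G(M))\cong V$ contains the crown $R_G(V)$, and $\overline{G}:=G/R_G(V)$ is a split extension $V^{\delta}\rtimes L$ in which $\soc(\overline{G})=V^{\delta}$ is self-centralising, $L$ acts diagonally, and the only complemented chief factors of $\overline{G}$ isomorphic to $V$ are the $\delta$ factors in $\soc(\overline{G})$. From this I would deduce, by a routine argument, that the maximal subgroups $\overline{M}$ of $\overline{G}$ with $\soc(\overline{G}/\core_{\overline{G}}(\overline{M}))\cong V$ are exactly the maximal subgroups not containing $\soc(\overline{G})$, and that each such $\overline{M}$ is the preimage of a complement of $\soc(\overline{G})/W$ in $\overline{G}/W$ for a uniquely determined maximal $\overline{G}$-submodule $W$ of $\soc(\overline{G})$ (the quotient $\soc(\overline{G})/W$ being isomorphic to $V$). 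There are $(q^{\delta}-1)/(q-1)$ choices of $W$, as these correspond to the hyperplanes of a $\delta$-dimensional vector space over $\End_G(V)\cong\mathbb F_q$, and for each $W$ the extension $\overline{G}/W\cong V\rtimes L$ splits, so the number of complements of $V$ in it equals $|Z^1(L,V)|$. Therefore
$$m_V(G)=\frac{q^{\delta}-1}{q-1}\,|Z^1(L,V)|.$$

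It remains to estimate $|Z^1(L,V)|$. If $V$ is the trivial $\mathbb F_p$-module then $L=1$, $|Z^1(L,V)|=1$, and $m_V(G)=(p^{\delta}-1)/(p-1)<p^{\delta+1}/(p-1)$, so assume $V$ nontrivial; then $V^{L}=0$ and hence $|Z^1(L,V)|=|V|\cdot|H^1(L,V)|$. The key input is the classical bound $|H^1(L,V)|<|V|$, valid because $V$ is a faithful irreducible $L$-module (Aschbacher--Guralnick). Since $H^1(L,V)$ and $V$ are both vector spaces over $\mathbb F_q$, this forces $|H^1(L,V)|\leq|V|/q$, so that $|Z^1(L,V)|\leq|V|^2/q$. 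Writing $|V|=q^{e}$ with $e\geq 1$, the desired bound $m_V(G)\leq|V|^{\delta+1}/(q-1)$ reduces, on clearing denominators and using $q^{\delta}-1<q^{\delta}$, to $\delta\leq 1+e(\delta-1)$, i.e. to $(\delta-1)(e-1)\geq 0$, which holds. As $q=q_V(G)\geq p$ and $|V|=n$, we conclude
$$m_V(G)\leq\frac{n^{\delta_V(G)+1}}{p-1}\qquad(V\in\mathcal A_n),$$
the trivial module being covered by the direct estimate above.

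Finally, summing over $\mathcal A_n$ and using that every $\delta_V(G)\geq 1$ together with the elementary inequality $\sum_i n^{a_i}\leq n^{\sum_i a_i}$ (for positive integers $a_i$ and $n\geq 2$, proved by induction from $x+y\leq xy$ for $x,y\geq 2$), we obtain
$$m_n^A(G)=\sum_{V\in\mathcal A_n}m_V(G)\leq\frac{n}{p-1}\sum_{V\in\mathcal A_n}n^{\delta_V(G)}\leq\frac{n^{\,1+\sum_{V\in\mathcal A_n}\delta_V(G)}}{p-1}=\frac{n^{\alpha_{p,t}(G)+1}}{p-1},$$
as required. The main obstacle is the cohomological input: one really needs the \emph{strict} inequality $|H^1(L,V)|<|V|$ for faithful irreducible $V$, since the weaker $|H^1(L,V)|\leq|V|$ would only give $q^{\delta}-1\leq q^{e(\delta-1)}$, which fails already at $\delta=2$, $e=1$; and it is essential that the cohomology be that of the faithfully acting quotient $L=G/C_G(V)$, not of $G$ itself. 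The crown-theoretic identity for $m_V(G)$, together with the verification that no maximal subgroup of $\overline{G}$ is over- or undercounted, is the remaining point requiring care, though it rests on standard facts.
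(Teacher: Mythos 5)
Your argument is correct, and it shares the paper's skeleton: decompose $m_n^A(G)=\sum_{V\in\mathcal A_n}m_V(G)$, bound each $m_V(G)$ by $\frac{q_V(G)^{\delta_V(G)}-1}{q_V(G)-1}\,|\der(G/C_G(V),V)|$ via crown theory (the paper simply cites Lubotzky and Detomi--Lucchini for this inequality, which is all that is needed; your exact-equality refinement is true but unnecessary here), and close with the same sum-to-product step $\sum_V n^{\delta_V(G)}\leq n^{\sum_V\delta_V(G)}$. Where you genuinely diverge is the cohomological input. The paper uses Guralnick--Hoffman's bound $|\der(L,V)|\leq|V|^{3/2}$ and splits into the cases $q_V(G)=n$ (where $L=G/C_G(V)$ is soluble and Stammbach gives $|\der(L,V)|\leq|V|$) and $q_V(G)\neq n$ (where $\dim_{\End_G(V)}V\geq 2$, so $n\geq q_V(G)^2$ absorbs the extra factor $n^{1/2}$). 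You instead write $|Z^1(L,V)|=|V|\cdot|H^1(L,V)|$ for nontrivial $V$ and invoke the Aschbacher--Guralnick strict bound $|H^1(L,V)|<|V|$ for faithful irreducible modules, sharpened by $\mathbb{F}_q$-linearity to $|H^1(L,V)|\leq|V|/q$, so that $|Z^1(L,V)|\leq|V|^2/q$; the single inequality $(\delta-1)(e-1)\geq 0$ then does the work of the paper's case division, with the trivial module treated separately (the paper folds it into the $q_V(G)=n$ branch). Your route is uniform and marginally sharper, but it genuinely needs the strictness of the $H^1$ bound, as you correctly observe; the paper's route trades that for the extra case analysis. Both correctly use $q_V(G)\geq p$ to produce the $p-1$ in the denominator, so the two proofs are equally valid.
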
	
\begin{proof} For a given $V\in \mathcal{A}_n$, let $m_V(G)$ be the number of maximal subgroups $M$ of $G$ with $\soc(G/\core_G(M))\cong_G V.$
		From \cite[Section 2]{pak} and \cite[Section 4]{crpr} it follows that
$$m_V(G)\leq \frac{q_V(G)^{\delta_V(G)}-1}{q_V(G)-1}|\der(G/C_G(V),V)|.$$
By \cite[Theorem 1]{guho}, we have $|\der(G/C_G(V),V)|\leq |V|^{3/2}$. Moreover 
(see for example \cite[Lemma 1]{st})
$|\der(G/C_G(V),V)|\leq |V|$ if $G/C_G(V)$ is soluble, which happens in particular when $q_V(G)=n$ (indeed in this case $G/C_G(V)$ is isomorphic to a subgroup of the multiplicative group of the field of order $q_V(G)$). If
$q_V(G)\neq n,$ then $\dim_{\End_G(V)}V\geq 2$, hence $n=|V|\geq q_G(V)^2$ and 
consequently
$$m_V(G)\leq \frac{q_V(G)^{\delta_G(V)}n^{3/2}}{q_V(G)-1}\leq  \frac{n^{\delta_G(V)/2}n^{3/2}}{p-1}
\leq \frac{n^{\delta_G(V)+1}}{p-1}.$$ On the other hand, if $q_V(G)=n,$ then
$$m_V(G)\leq \frac{q_V(G)^{\delta_G(V)}n}{q_V(G)-1}\leq  \frac{n^{\delta_G(V)}n}{p-1}
\leq  \frac{n^{\delta_G(V)+1}}{p-1}.$$
We conclude
$$\begin{aligned}m_n^A(G)&=\sum_{V\in \mathcal{A}_n}m_V(G)\leq 
\frac{n}{p-1}\sum_{V\in \mathcal{A}_n}n^{\delta_V(G)}
\leq 
\frac{n}{p-1}\prod_{V\in \mathcal{A}_n}n^{\delta_V(G)}\\&=\frac{n^{1+\sum_{V\in \mathcal{A}_n}\delta_V(G)}}{p-1}= \frac{n^{\alpha_{p,t}(G)+1}}{p-1}.\qedhere\end{aligned}$$
\end{proof}

\begin{lemma}\label{lnab1}
If $m_n^B(G)\neq 0,$ then $n\geq 5$, $\beta(G)\neq 0$ and $m_n^B(G)\leq \frac{\beta(G)(\beta(G)+1)n^2}{2}.$
\end{lemma}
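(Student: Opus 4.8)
The plan is to mirror the structure of the proof of Lemma~\ref{lab1}, but now for type B maximal subgroups, using the counting results from \cite{pak} and \cite{crpr} together with the combinatorial estimates on nonabelian chief factors from Lemma~\ref{stime}. First, suppose $m_n^B(G)\neq 0$, so that $G$ has a maximal subgroup $M$ with $\soc(G/\core_G(M))$ nonabelian. Let $W$ denote such a (nonabelian) socle, say $W\cong S^k$ for a nonabelian simple group $S$. Since $|S|\geq 60$ and any index-$n$ maximal subgroup $M$ with nonabelian socle $W$ satisfies $n\geq |W|^{1/?}$\,---\,more precisely, a core-free maximal subgroup of an almost simple or diagonal-type primitive group has index at least $5$ (the smallest being the index-$5$ maximal subgroups $A_4$ of $A_5$)\,---\,we get $n\geq 5$. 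The fact that $\beta(G)\neq 0$ is immediate: the existence of a nonabelian chief factor $W$ (a section of $G$ arising as $\soc(G/\core_G(M))$) forces at least one nonabelian factor in a chief series, so $\beta(G)\geq 1$.

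For the quantitative bound, I would stratify the type-B maximal subgroups by the chief factor they ``point to''. For each nonabelian chief factor, the number of maximal subgroups $M$ of index $n$ with $\soc(G/\core_G(M))$ the corresponding $G$-section is controlled by results of Pak and Aschbacher--Scott style counting (as used in \cite{pak,crpr}): the count is at most a constant (absorbed into the polynomial factor) times $n^{2}$ per such chief factor, the exponent $2$ coming from the derivation/complement count $|\der(G/C_G(W),W)|$ bounded in terms of $|W|$, combined with the number of ways the relevant simple factors can be permuted. Summing over the $\beta(G)$ nonabelian chief factors, and accounting for the fact that distinct maximal subgroups can have cores whose associated factors lie in different positions of the chief series, yields a bound of the shape $\binom{\beta(G)+1}{2}n^2 = \frac{\beta(G)(\beta(G)+1)}{2}n^2$; the triangular-number coefficient arises exactly as in Lemma~\ref{lab1}'s passage from a sum $\sum n^{\delta_V}$ to a product, here adapted to the nonabelian setting where one must also count ``crossed'' maximal subgroups involving pairs of isomorphic nonabelian factors.

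The main obstacle will be pinning down the per-factor bound $m_W(G)\leq c\cdot n^{2}$ with the correct constant so that the sum over the (at most $\beta(G)$, or $\beta(G)(\beta(G)+1)/2$ counting pairs) contributions telescopes to exactly $\frac{\beta(G)(\beta(G)+1)}{2}n^2$ rather than something larger. Concretely: for a single nonabelian $G$-chief factor $W$ with $|W^S| = n^{?}$ (where $W^S$ is the ``minimal'' block), the number of maximal subgroups with that socle-type is at most $|W|\cdot|\der(G/C_G(W),W)|/(\text{something})$, and one must check $|\der| \leq |W|$ or $|W|^{3/2}$ in the relevant cases and that $|W|$ and $n$ are related so the product stays at most $n^2$. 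I would invoke \cite[Theorem~1]{guho} for the general derivation bound $|\der(G/C_G(W),W)|\leq|W|^{3/2}$ and the solubility-improved bound $|\der|\leq|W|$ where applicable, exactly as in Lemma~\ref{lab1}, then verify that because $W$ is nonabelian we always have $n\geq|W|^{1/2}$ (indeed $n\geq 5$ and the index grows with $|W|$), making $|W|^{3/2}\cdot(\text{small})\leq n^{2}$. Finally, combining the $\beta(G)$ single-factor contributions with the $\binom{\beta(G)}{2}$ ``mixed'' ones (pairs of isomorphic nonabelian factors, handled by the same Aschbacher--Scott machinery as in \cite{crpr}) gives $\beta(G)+\binom{\beta(G)}{2}=\binom{\beta(G)+1}{2}$ blocks each contributing at most $n^2$, which is the claimed estimate.
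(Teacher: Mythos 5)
The non-quantitative parts of your argument are fine and match the paper: $n\geq 5$ because there is no unsoluble primitive permutation group of degree less than $5$, and $\beta(G)\neq 0$ because a type B maximal subgroup forces a nonabelian chief factor. The gap is in the inequality $m_n^B(G)\leq \beta(G)(\beta(G)+1)n^2/2$, which is the entire content of the lemma: you do not prove it. Your text is a plan with explicitly unresolved steps (``the main obstacle will be pinning down the per-factor bound \dots so that the sum \dots telescopes to exactly \dots rather than something larger'', ``one must check \dots'', ``I would invoke \dots''), and the plan transplants the wrong machinery. The derivation bound $|\der(G/C_G(V),V)|\leq |V|^{3/2}$ of \cite{guho} and the $q_V(G)^{\delta_V(G)}$ count in Lemma \ref{lab1} parametrize maximal subgroups complementing an \emph{abelian} chief factor $V$; when $\soc(G/\core_G(M))$ is nonabelian, the maximal subgroups with a fixed core are not indexed by derivations into the socle, so the exponent $2$ cannot be extracted the way you describe, and your auxiliary claim $n\geq |W|^{1/2}$ is asserted rather than proved (justifying it would itself require an O'Nan--Scott style analysis of type B primitive groups).

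The paper does none of this reconstruction: it obtains the bound by directly citing \cite[Claim 2.4]{pak}, which is verbatim the statement $m_n^B(G)\leq \beta(G)(\beta(G)+1)n^2/2$ with $\beta(G)$ the number of nonabelian chief factors. Lubotzky's argument there rests on a different counting scheme --- bounding the number of normal subgroups $N$ with $G/N$ primitive of type B by $\binom{\beta(G)+1}{2}$ and the number of index-$n$ maximal subgroups with a prescribed such core by $n^2$ --- not on cohomological/derivation counts. So either quote that claim, as the paper does, or reproduce its proof; as written, your proposal leaves the lemma unproved exactly where it has content.
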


\begin{proof}
The condition $n\geq 5$ follows from the fact there there is no unsoluble primitive permutation group of degree $n<5.$ The remaining part of the statement follows from
\cite[Claim 2.4]{pak}.
\end{proof}

 \begin{lemma} Let $d=\max_p d_p(G)$ 
 	and let $$\mu_p(G)=\sum_{k\geq d+2}\left(\sum_{t\geq 1}\frac{m_{p^t}^A(G)}{p^{tk}}\right).$$
 	If $\alpha_p(G)=0,$ then $\mu_p(G)=0.$ Otherwise
$$\mu_p(G)
\leq \begin{cases}\frac{1}{p^{d-\alpha_p(G)}}\frac{1}{(p-1)^2}\leq \frac{1}{(p-1)^2}&\text{if $p$ is odd,}\\ \frac{1}{2^{d-\alpha_2(G)}}\frac{1}{2}\leq \frac{1}{2}&\text{otherwise.}\end{cases}$$
\end{lemma}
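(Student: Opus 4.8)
The plan is to substitute the bound of Lemma~\ref{lab1} into the defining series for $\mu_p(G)$, interchange the two summations, evaluate the geometric series in $k$, and then control the remaining sum over $t$ using the constraint $\alpha_p(G)=\sum_t\alpha_{p,t}(G)\le d_p(G)\le d$ supplied by Lemma~\ref{stime}(1). The case $\alpha_p(G)=0$ is immediate: then every $\alpha_{p,t}(G)=0$, so by Lemma~\ref{lab1} every $m_{p^t}^A(G)=0$, hence $\mu_p(G)=0$. So assume $\alpha_p:=\alpha_p(G)\ge1$, write $\alpha_{p,t}:=\alpha_{p,t}(G)$, and recall $\alpha_p\le d$.

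Interchanging sums, using $\sum_{k\ge d+2}p^{-tk}=p^{-t(d+1)}/(p^t-1)$, and then invoking Lemma~\ref{lab1} (which also gives $m_{p^t}^A(G)=0$ when $\alpha_{p,t}=0$),
\[
\mu_p(G)=\sum_{t\ge1}\frac{m_{p^t}^A(G)}{p^{t(d+1)}(p^t-1)}\ \le\ \frac{1}{p-1}\sum_{t:\,\alpha_{p,t}\ge1}\frac{p^{t(\alpha_{p,t}-d)}}{p^t-1}.
\]
For odd $p$ it then suffices to prove $T:=\sum_{t:\,\alpha_{p,t}\ge1}p^{\,t(\alpha_{p,t}-d)+d-\alpha_p}/(p^t-1)\le 1/(p-1)$; the exponent in the $t$-th term is $t\alpha_{p,t}-\alpha_p-d(t-1)$. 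If exactly one index $t_0$ has $\alpha_{p,t_0}\ge1$, then $\alpha_{p,t_0}=\alpha_p$, the exponent equals $(t_0-1)(\alpha_p-d)\le0$, and $T\le 1/(p^{t_0}-1)\le1/(p-1)$. If at least two indices contribute, then each has $\alpha_{p,t}\le\alpha_p-1$, so, using $d\ge\alpha_p$, the exponent is $\le-1$ when $t=1$ and $\le-t$ when $t\ge2$; hence
\[
T\ \le\ \sum_{t\ge1}\frac{1}{p^{t}(p^{t}-1)}\ \le\ \frac{1}{p-1}\sum_{t\ge1}p^{1-2t}\ =\ \frac{p}{(p-1)^2(p+1)}\ \le\ \frac{1}{p-1},
\]
the last inequality being $p^2-p-1\ge0$. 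This gives $\mu_p(G)\le 1/\bigl(p^{d-\alpha_p}(p-1)^2\bigr)$, and the cruder bound follows since $d\ge\alpha_p$.

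For $p=2$ the identical computation yields only $\mu_2(G)\le 2^{-(d-\alpha_2)}$, because the factor $1/(p-1)$ coming from Lemma~\ref{lab1} is trivial; the missing factor $\tfrac12$ must be recovered from the $t=1$ term. Here the point is that there is a unique $G$-module of order $2$, the trivial one, so $m_2^A(G)$ is the number of index-$2$ subgroups of $G$, namely $2^{\alpha_{2,1}(G)}-1<2^{\alpha_{2,1}(G)}$ — better than the generic bound of Lemma~\ref{lab1} by a factor exceeding $2$ (this also follows from the estimate in its proof, with $q_V=2$ and $|\der(1,V)|=1$). Keeping this sharper bound at $t=1$ and Lemma~\ref{lab1} for $t\ge2$,
\[
\mu_2(G)\ \le\ \frac{2^{\alpha_{2,1}}-1}{2^{d+1}}+\sum_{t\ge2:\,\alpha_{2,t}\ge1}\frac{2^{t(\alpha_{2,t}-d)}}{2^t-1},
\]
and multiplying through by $2^{d+1}$ and using $d\ge\alpha_2$ to bound each $t\ge2$ summand by $2^{\alpha_2+1}\,2^{t(\alpha_{2,t}-\alpha_2)}/(2^t-1)$, one checks the resulting bound $\le 2^{\alpha_2}$ in the same three cases — all mass at $t=1$; all mass at a single $t_0\ge2$; everything else, where every contributing $t$ has $\alpha_{2,t}\le\alpha_2-1$ — the three subtotals being at most $2^{\alpha_2}-1$, $\tfrac13\cdot2^{\alpha_2+1}$ and $2^{\alpha_2-1}+\tfrac1{12}\cdot2^{\alpha_2}$. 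Hence $\mu_2(G)\le 2^{-(d-\alpha_2)}\cdot\tfrac12$.

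The only real obstacle is the prime $2$: the generic estimate of Lemma~\ref{lab1} is off by a factor $2$ for $p=2$ at $t=1$, and one has to notice that there the relevant module is trivial, so that the count is the sharp $2^{\alpha_{2,1}(G)}-1$ rather than $2^{\alpha_{2,1}(G)+1}$ — precisely what is needed to reach the constant $\tfrac12$. Everything else is routine manipulation of geometric series together with $\sum_t\alpha_{p,t}(G)=\alpha_p(G)\le d$.
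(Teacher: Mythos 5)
Your proof is correct and follows essentially the same route as the paper: substitute Lemma \ref{lab1}, sum the geometric series in $k$, exploit $\sum_t\alpha_{p,t}(G)=\alpha_p(G)\le d_p(G)\le d$, and --- the one essential extra idea for $p=2$ --- replace the generic bound at $t=1$ by the exact count $m_2^A(G)=2^{\alpha_{2,1}(G)}-1$, exactly as the paper does (the paper merely handles the sum over $t$ more slickly via $\sum_t p^{\alpha_{p,t}(G)}\theta_{p,t}\le p^{\sum_t\alpha_{p,t}(G)}$ in place of your case analysis). One small arithmetic quibble: in your third subcase for $p=2$ the tail $\sum_{t\ge 2}2^{\alpha_2+1}/\bigl(2^t(2^t-1)\bigr)$ is about $0.213\cdot 2^{\alpha_2}$ (and at most $\tfrac29\cdot 2^{\alpha_2}$), not $\tfrac{1}{12}\cdot 2^{\alpha_2}$, but since $\tfrac12+\tfrac29<1$ the required bound $\le 2^{\alpha_2}$ still holds and the conclusion is unaffected.
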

 \begin{proof}First notice that, by Lemma \ref{stime}, we have 
 	$\alpha_p(G)\leq d_p(G)\leq d.$
 	Let $\theta_{p,t}=0$ if $\alpha_{p,t}(G)=0,$ $\theta_{p,t}=1$ otherwise. 
 	 By Lemma \ref{lab1} we have
$$\begin{aligned}\sum_{k\geq d+2}&\left(\sum_{t\geq 1}\frac{m_{p^t}^A(G)}{p^{tk}}\right)\leq \frac{p}{p-1}\sum_{k\geq d+2}\left(\sum_{t\geq 1}\frac{p^{t\alpha_{p,t}(G)}\theta_{p,t}}{p^{tk}}\right)\\&\leq \sum_{k\geq d+2}\frac{p}{p-1}\left(\sum_{t\geq 1}\frac{p^{\alpha_{p,t}(G)}\theta_{p,t}}{p^{k}}\right)
\leq \frac{p}{p-1}\sum_{k\geq d+2}\left(\frac{p^{\sum_{t\geq 1}\alpha_{p,t}(G)}}{p^k}\right)\\&\leq \frac{p}{p-1}\sum_{k\geq d+2}\frac{p^{\alpha_p(G)}}{p^k}
\leq \frac{p}{p-1}\sum_{k\geq d+2}\frac{p^{d}}{p^kp^{d-\alpha_p(G)}}\\
&\leq \frac{p}{p^{d-\alpha_p(G)}(p-1)}\sum_{u\geq 2}\frac{1}{p^u}\leq \frac{1}{p^{d-\alpha_p(G)}}\frac{1}{(p-1)^2}.
\end{aligned}$$
Notice that, for $k>d\geq d_2(G)\geq \alpha_{2,t}(G),$ we have
$$\frac{m_{2^t}^A(G)}{2^{tk}}\leq \frac{2^{t\alpha_{2,t}(G)+1}}{2^{tk}}\leq \frac{2^{\alpha_{2,t}(G)}}{2^k} \text \quad { if } \quad t>1.
$$
On the other hand,
$$
m_2^A(G)=2^{\alpha_{2,1}(G)}-1\leq 2^{\alpha_{2,1}(G)}.
$$
Hence
$$\begin{aligned}\sum_{k\geq d+2}&\left(\sum_{t\geq 1}\frac{m_{2^t}^A(G)}{2^{tk}}\right)\leq \sum_{k\geq d+2}\left(\sum_{t\geq 1}\frac{2^{\alpha_{2,t}(G)}\theta_{2,t}}{2^{k}}\right)
\leq \sum_{k\geq d+2}\left(\frac{2^{\sum_{t\geq 1}\alpha_{2,t}(G)}}{2^k}\right)\\&\leq \sum_{k\geq d+2}\frac{2^{\alpha_2(G)}}{2^k}
\leq \sum_{k\geq d+2}\frac{2^{d}}{2^k2^{d-\alpha_2(G)}}
\leq \frac{1}{2^{d-\alpha_2(G)}}\sum_{u\geq 2}\frac{1}{2^u}\leq \frac{1}{2^{d-\alpha_p(G)}}\frac{1}{2}.\qedhere
\end{aligned}$$
\end{proof}

\begin{lemma}Let $d=\max_p d_p(G)$  and let 
	$$\mu^*(G)=\sum_{k\geq d+2}\left(\sum_{n\geq 5} \frac{m_n^B(G)}{n^k}\right).$$
If $\beta(G)=0,$ then $\mu^*(G)=0.$ Otherwise	
$$\mu^*(G)\leq 
	 \frac{1}{4\cdot5^{d-(\beta(G)+1)}}\leq \frac{1}{4}.$$
\end{lemma}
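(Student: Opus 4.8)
The plan is to substitute the bound of Lemma~\ref{lnab1} into the definition of $\mu^*(G)$ and then sum the resulting double series, taking care over how the power of $n$ is distributed. If $\beta(G)=0$ there is nothing to do: by Lemma~\ref{lnab1} we have $m_n^B(G)=0$ for every $n\geq 5$, so $\mu^*(G)=0$. So assume $\beta:=\beta(G)\geq 1$. By Lemma~\ref{stime}(3) we then have $\beta\leq d_2(G)-1\leq d-1$, hence in particular $d\geq 2$ and $d-\beta-1\geq 0$; this last nonnegativity is what the estimate will hinge on.

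Using $m_n^B(G)\leq \beta(\beta+1)n^2/2$ from Lemma~\ref{lnab1} and interchanging the order of summation, I would write
$$\mu^*(G)\leq \frac{\beta(\beta+1)}{2}\sum_{n\geq 5}\left(\sum_{k\geq d+2}\frac{n^2}{n^k}\right)=\frac{\beta(\beta+1)}{2}\sum_{n\geq 5}\frac{1}{n^{d-1}(n-1)},$$
since the inner geometric series in $k$ equals $\frac{1}{n^d}\cdot\frac{1}{1-1/n}=\frac{1}{n^{d-1}(n-1)}$. The key manoeuvre is then to split $n^{d-1}=n^{d-1-\beta}\cdot n^{\beta-1}\cdot n$ and, using $n\geq 5$ together with the nonnegativity of the exponents $d-1-\beta$ and $\beta-1$, to estimate
$$\frac{1}{n^{d-1}(n-1)}\leq \frac{1}{5^{d-1-\beta}}\cdot\frac{1}{5^{\beta-1}}\cdot\frac{1}{n(n-1)}=\frac{1}{5^{d-2}}\cdot\frac{1}{n(n-1)}.$$
Since $\sum_{n\geq 5}\frac{1}{n(n-1)}=\sum_{n\geq 5}\left(\frac{1}{n-1}-\frac{1}{n}\right)=\frac14$ telescopes, this yields $\mu^*(G)\leq \dfrac{\beta(\beta+1)}{8\cdot 5^{d-2}}$.

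It remains to compare this with the claimed bound: after clearing denominators, $\dfrac{\beta(\beta+1)}{8\cdot 5^{d-2}}\leq \dfrac{1}{4\cdot 5^{d-\beta-1}}$ is exactly the elementary inequality $\beta(\beta+1)\leq 2\cdot 5^{\beta-1}$ for $\beta\geq 1$. It holds with equality at $\beta=1$, and for $\beta\geq 1$ it propagates by an immediate induction, since $(\beta+1)(\beta+2)=\beta(\beta+1)\cdot\frac{\beta+2}{\beta}\leq 3\,\beta(\beta+1)\leq 6\cdot 5^{\beta-1}\leq 2\cdot 5^{\beta}$. Finally $d-\beta-1\geq 0$ gives at once $\frac{1}{4\cdot 5^{d-\beta-1}}\leq \frac14$. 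The only point that needs care is the bookkeeping in the middle step: one must split $n^{d-1}$ in exactly the right way so that the leftover factor is $5^{d-2}$ and, after the comparison, the surviving $5^{d-\beta-1}$ matches the denominator in the target. There is no analytic obstacle, and the constant $1/4$ is sharp — attained at $\beta=1$ — which is why the statement has precisely this shape.
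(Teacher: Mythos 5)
Your proposal is correct and follows essentially the same route as the paper: substitute the bound of Lemma~\ref{lnab1}, sum the geometric series in $k$, telescope $\sum_{n\geq 5}\frac{1}{n(n-1)}=\frac14$, and absorb the factor $\beta(\beta+1)$ via the inequality $\beta(\beta+1)\leq 2\cdot 5^{\beta-1}$ together with $d\geq\beta+1$ from Lemma~\ref{stime}. The only differences are cosmetic (order of the two summations, and your explicit induction for $\beta(\beta+1)\leq 2\cdot 5^{\beta-1}$, which the paper leaves implicit).
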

\begin{proof} Notice that, by Lemma \ref{stime},  if $\beta(G)\neq 0,$ then $d\geq d_2(G)\geq \beta(G)+1 \geq 2.$ We deduce
	from Lemma \ref{lnab1} that
 $$\begin{aligned}
\sum_{k\geq d+2}&\left(\sum_{n\geq 5} \frac{m_n^B(G)}{n^k}\right)
 \leq \sum_{k\geq d+2}\left(\sum_{n\geq 5} \frac{\beta(G)(\beta(G)+1)n^2}{2n^k}\right)\\
 &\leq \frac{\beta(G)(\beta(G)+1)}2\sum_{u\geq 2}\left(\sum_{n\geq 5}\frac{n^2}{n^{d+u}}\right)
\leq  \frac{\beta(G)(\beta(G)+1)}{2\cdot 5^{d-2}}\sum_{u\geq 2}\left(\sum_{n\geq 5}\frac{1}{n^u}\right)
 \\ &\leq  \frac{\beta(G)(\beta(G)+1)}{2\cdot 5^{d-2}}\sum_{n\geq 5}\left(\sum_{u\geq 2}\frac{1}{n^u}\right)\leq \frac{\beta(G)(\beta(G)+1)}{2\cdot 5^{d-2}} \sum_{n\geq 5}\frac{1}{n^2}\frac{n}{n-1}
 \\&=\frac{\beta(G)(\beta(G)+1)}{2\cdot 5^{d-2}}\sum_{n\geq 4}\frac{1}{n(n+1)}
=\frac{\beta(G)(\beta(G)+1)}{2\cdot 5^{\beta(G)-1}\cdot 5^{d-(\beta(G)+1)}}\cdot \frac{1}{4}\\&\leq \frac{1}{4\cdot 5^{d-(\beta(G)+1)}}.\qedhere \end{aligned}$$
 \end{proof}

\begin{lemma}We have $\mu_2(G)+\mu^*(G)\leq 1/2$.
\end{lemma}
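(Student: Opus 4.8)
The plan is a short case analysis driven by the two preceding lemmas together with Lemma~\ref{stime}. If $\beta(G)=0$ then $\mu^*(G)=0$, and the bound on $\mu_2(G)$ established above already gives $\mu_2(G)\le \tfrac12\cdot 2^{-(d-\alpha_2(G))}\le\tfrac12$, using $\alpha_2(G)\le d_2(G)\le d$ from Lemma~\ref{stime}(1). If instead $\alpha_2(G)=0$ then $\mu_2(G)=0$ and the bound on $\mu^*(G)$ above gives $\mu^*(G)\le\tfrac14\le\tfrac12$. So the only case requiring work is $\alpha_2(G)\ne 0$ and $\beta(G)\ne 0$.

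In that case I would feed the arithmetic constraints of Lemma~\ref{stime} into the two bounds. Part~(2) gives $\alpha_2(G)+\beta(G)\le d_2(G)\le d$, hence $d-\alpha_2(G)\ge\beta(G)$, so the $\mu_2$-bound improves to
$$\mu_2(G)\le \tfrac12\,2^{-(d-\alpha_2(G))}\le 2^{-(\beta(G)+1)}.$$
Part~(3) gives $d\ge d_2(G)\ge\beta(G)+1$, so $5^{\,d-(\beta(G)+1)}\ge 1$ and the $\mu^*$-bound gives $\mu^*(G)\le\tfrac14$. Adding these and using $\beta(G)\ge 1$,
$$\mu_2(G)+\mu^*(G)\le 2^{-(\beta(G)+1)}+\tfrac14\le\tfrac14+\tfrac14=\tfrac12.$$

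I do not expect any real obstacle: the argument is bookkeeping on top of the three lemmas. The one point worth flagging is the mechanism that keeps the sum from being merely $\le\tfrac34$: the two ``loss'' exponents $d-\alpha_2(G)$ and $d-\beta(G)-1$ cannot both vanish simultaneously, since $\alpha_2(G)+\beta(G)\le d$ forces $d-\alpha_2(G)\ge\beta(G)\ge 1$ whenever $\beta(G)\ne 0$; exploiting this in the $\mu_2$-term is exactly what lands the total at $\tfrac12$.
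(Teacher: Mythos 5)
Your proof is correct and follows essentially the same route as the paper: both arguments rest on feeding the inequality $\alpha_2(G)+\beta(G)\le d_2(G)\le d$ from Lemma~\ref{stime}(2) into the two preceding bounds, so that whenever $\beta(G)\ne 0$ the exponent $d-\alpha_2(G)$ is forced to be at least $1$ and each term is at most $\tfrac14$. Your case split is slightly finer than the paper's (which splits only on $d=\alpha_2(G)$ versus $d\ne\alpha_2(G)$), but the mechanism and the arithmetic are the same.
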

\begin{proof}
By Lemma \ref{stime}, $\alpha_2(G)+\beta(G)\leq d_2(G)\leq d.$ If $d=\alpha_2(G)$ then $\beta(G)=0$, and consequently 
$$\mu_2(G)+\mu^*(G)=\mu_2(G)\leq  \frac{1}{2}.$$
In the remain cases, we have
$$\mu_2(G)+\mu^*(G)\leq
\frac{1}{2\cdot 2^{d-\alpha_p(G)}}+\frac{1}{4\cdot 5^{d-(\beta(G)+1)}}\leq \frac{1}{4}+\frac{1}{4}=\frac 1 2.\qedhere$$
\end{proof}

\begin{proof}[Proof of Theorem \ref{main}] From (\ref{inizi1}), (\ref{inizi2}) and the last three lemmas, we deduce
$$\begin{aligned}e(G)&=\sum_{k\geq 0}(1-P_G(k))\leq  d+2+ \sum_{k\geq d+2}(1-P_G(k))\\
&\leq d+2+\sum_p\left(\sum_{k\geq d+2}\left(\sum_{t\geq 1}\frac{m_{p^t}^A(G)}{p^{tk}}\right)\right)
+\sum_{k\geq d+2}\left(\sum_{n\geq 5} \frac{m_n^B(G)}{n^k}\right)\\
&=d+2+\sum_p \mu_p(G)+\mu^*(G) \leq d+\frac{5}{2}+ \sum_{p>2} \frac{1}{(p-1)^2}.\qedhere
\end{aligned}$$
\end{proof}

\end{document}